\newtheorem{theorem}{Theorem}[section]
\newtheorem{definition}[theorem]{Definition}
\newtheorem{remark}[theorem]{Remark}
\newtheorem{notation}[theorem]{Notation}
\newtheorem{example}[theorem]{Example}
\newenvironment{proof}{\mbox{\bf Proof.}}{\mbox{$\dashv$}\bigskip}
\begin{document}
\begin{center}
 {\Large\textbf{Non-predetermined  Model Theory}}\\
 \textbf{(short paper)}\\\vspace{.25in}
{\bf  Rasoul Ramezanian}\\
Department of Mathematical Sciences, \\
Sharif University of Technology,\\
P. O. Box 11365-9415, Tehran, Iran\\
 ramezanian@sharif.edu
\end{center}
\begin{abstract} This article introduce a new model theory call
non-predetermined model theory where functions and relations need
not to be determined already and they are determined through time.
\end{abstract}

\section{Introduction}

A mathematical structure is set of object   with a collection of
distinguished functions, relations, and special elements. For
example, one may consider the structure $N=(\mathbb{N}, +,
\cdot,<, 0,1)$ of natural numbers where $+:\mathbb{N}\times
\mathbb{N}\rightarrow \mathbb{N}$ and $\cdot:\mathbb{N}\times
\mathbb{N}\rightarrow \mathbb{N}$ are functions, $<\subseteq
\mathbb{N}\times \mathbb{N}$ is a relation and $0$ and $1$ are two
distinguished elements in $\mathbb{N}$. Whenever a subject (a
mathematician) wants to know the value of for example $5+7$, he
refers to definition of the function $+$ and according to the
definition he finds out that the value is $12$. The function $+$
is predetermined and is independent of the subject behavior.

In this article, we introduce structures which are
non-predetermined and subject-dependent. In non-predetermined
structures, functions and relations are not needed to be
determined already. In non-predetermined structures, for a given
unary function $f$ and an object $a$ the value of $f(a)$ is not
necessary determined already, and it is determined as soon as the
subject intends to know (or compute) the value $f(a)$.

\section{Non-predetermined Structures}

In this section, we introduce non-predetermined structures.

\begin{notation}~
\begin{itemize}

\item[-] Let $D$ be a set. We define $D^\ast$ to be the set of all
finite sequences (string) over $D$.

\item[-]For each string $\mathrm{s}=\langle
d_1,d_2,...,d_k\rangle$ over $D$, and a set  $A$, we let
$F_\mathrm{s}(A)$ be the set of all functions from
$\{d_1,d_2,...,d_k\}$ to $A$.

\item[-] For two strings $\mathrm{s}=\langle
d_1,d_2,...,d_k\rangle$ and $\mathrm{s}'=\langle
d'_1,d'_2,...,d_n\rangle$ we let $\mathrm{s}.\mathrm{s}'=\langle
d_1,...,d_k,d'_1,...,d'_n\rangle$.

\item[-] We refer to the empty string $\langle\rangle$ by
$\lambda$.

\item[-] For two string $\mathrm{s}$ and $\mathrm{t}$,  we say
$\mathrm{s}\leq \mathrm{t}$ whenever there exists $\mathrm{v}$
such that $\mathrm{t=sv}$.

\end{itemize}
\end{notation}

\begin{definition}
 A \emph{non-predetermined function} from $D$ to $A$ denoted by
$f:D\hookrightarrow A$ is a mapping $H$ where $H$ maps each string
$\mathrm{s}\in D^\ast$ to  a function  $H[\mathrm{s}]\in
F_\mathrm{s}(A)$ with following property:
\begin{itemize}
\item[] for each   $\mathrm{s}=\langle d_1,d_2,...,d_k\rangle$ in
$D^\ast$ and each $d_{k+1}\in D$ we have for all $1\leq i\leq k$,
$H[\mathrm{s}'](d_i)=H[\mathrm{s}](d_i)$ where
$\mathrm{s}'=\langle d_1,d_2,...,d_k,d_{k+1}\rangle$.
\end{itemize}
\end{definition}

 A first order  language  $L$  contains
\begin{itemize}
\item  a  finite set of predicate symbols $\mathcal{R}=\{R_i\mid
i\leq m_1\}$, and a natural number $n_{i}$    as its ary,

\item  a  finite set of function symbols $\mathcal{F}=\{f_i\mid
i\leq m_0\}$, and a natural number $n_i$ as its ary,

\item a  set of constant symbols $\mathcal{C}$.

\end{itemize}

\begin{definition}
A \emph{subject-dependent $L$-structure} $\mathcal{M}$ is given by
following data
\begin{itemize}
\item[(i)] A set $M$ called the universe or the underlying set of
$\mathcal{M}$,

\item[(ii)] A collection
$\mathcal{F}=^{\mathcal{M}}=\{f^{\mathcal{M}}_i\mid i\leq m_0\}$
where each $f^{\mathcal{M}}_i$ is a non-predetermined function
$f^{\mathcal{M}}_i:M^{n_i}\hookrightarrow M$ correspond to symbol
function $f_i\in \mathcal{F}$,

\item[(iii)] A collection of relations
$\mathcal{R}=^{\mathcal{M}}=\{R^{\mathcal{M}}_i\mid i\leq m_1\}$
where each $R^{\mathcal{M}}_i$ is a non-predetermined function
$R^{\mathcal{M}}_i:M^{n_i}\hookrightarrow \{0,1\}$ correspond to
symbol function $R_i\in \mathcal{R}$,

\item[(iv)] A collection of distinguish elements
$\{c^\mathcal{M}_i\mid i\in I_2\}$ correspond to constant symbols
in $\mathcal{C}$.
\end{itemize} A state of the structure $\mathcal{M}$ is
$\mathrm{e}=([\mathrm{s}_0,\mathrm{s}_1, ...,\mathrm{s}_{m_0}],
[\mathrm{s}'_0,\mathrm{s}'_1,...,\mathrm{s}'_{m_1}])$ where
$\mathrm{s}_i,\mathrm{s}'_i$ belong to $M^{n_i}$. Always one and
only one state called the \emph{current state} denoted by
$(\mathcal{M},\mathrm{e_c})$. Initially, we may assume that all
strings in the current state are empty, that is
$\mathrm{e_c}=([\lambda,\lambda,...\lambda],[\lambda,\lambda,...,\lambda])$.

The structure $\mathcal{M}$ is like a black box for the
\emph{subject}.

Suppose that $\mathrm{e_c}=([\mathrm{s}_0,\mathrm{s}_1,
...,\mathrm{s}_{m_0}],
[\mathrm{s}'_0,\mathrm{s}'_1,...,\mathrm{s}'_{m_1}])$.

\begin{itemize}
\item[i.] Whenever the subject wants to know what is the value of
a function $f^{\mathcal{M}}_i$ ($1\leq i\leq m_0$) at a point
$d\in M^{n_i}$, he queries $(f^{\mathcal{M}}_i,d)$ to
$\mathcal{M}$ then the current state of $\mathcal{M}$ changes to
$\mathrm{e_c}=([\mathrm{s}_0,\mathrm{s}_1, ...,
\mathrm{s}_i.\langle d\rangle,...,\mathrm{s}_{m_0}],
[\mathrm{s}'_0,\mathrm{s}'_1,...,\mathrm{s}'_{m_1}])$,  and
$H_i[\mathrm{s}_i.\langle d\rangle](d)$ is return as the value of
$f^{\mathcal{M}}_i(d)$ (where $H_i$ is the corresponding map to
$f^{\mathcal{M}}_i$)..

\item[ii.] Whenever the subject wants to know what is the value of
a relation $R^{\mathcal{M}}_i$ ($1\leq i\leq m_1$) at a point
$d\in M^{n_i}$, he queries $(R^{\mathcal{M}}_i,d)$ to
$\mathcal{M}$ then the current state of $\mathcal{M}$ changes to
$\mathrm{e_c}=([\mathrm{s}_0,\mathrm{s}_1, ...,\mathrm{s}_{m_0}],
[\mathrm{s}'_0,\mathrm{s}'_1,..., \mathrm{s}'_i.\langle
d\rangle,...,\mathrm{s}'_{m_1}])$,  and
$H'_i[\mathrm{s}'_i.\langle d\rangle](d)$ is return as the value
of $R^{\mathcal{M}}_i(d)$  (where $H'_i$ is the corresponding map
to $R^{\mathcal{M}}_i$).

\end{itemize}

We refer to $f^{\mathcal{M}}$ and $R^{\mathcal{M}}$ and
$c^{\mathcal{M}}$ as interpretation of symbols $f$, $R$ and $c$ in
structure $\mathcal{M}$.
\end{definition}

\begin{remark} Note that

\begin{itemize}
\item[1)] the current state of the structure $\mathcal{M}$  is
determined according to behavior of the subject,

\item[2)] All functions $f^{\mathcal{M}}_i$'s and relations
$R^{\mathcal{M}}_i$'s are not \emph{predetermined} in the
structure $\mathcal{M}$.  As subject freely chooses a point $d\in
M^{n_i}$ and a function $f^{\mathcal{M}}_i$ (or a relation
$R^{\mathcal{M}}_i$) the current state of the structure changes
and the value $f^{\mathcal{M}}_i(d)$ ($R^{\mathcal{M}}_i(d)$) is
\emph{determined}.
\end{itemize}

\end{remark}

\begin{example} Consider the structure  $\mathcal{M}=\langle M=\{0,1\}, \mathcal{R}^{\mathcal{M}}=\emptyset,
 \mathcal{R}^{\mathcal{M}}=\{ R_1\},
 \mathcal{C}^{\mathcal{M}}=\emptyset\rangle$ where $R_1$ is a
 non-predetermined unary relation defined below:
 \begin{itemize}
\item[-] $H_{R_1}[0](0)=1$,  $H_{R_1}[\langle 0,1\rangle][1]=0$,

\item[-] $H_{R_1}[1](1)=1$, $H_{R_1}[\langle 1,0\rangle][0]=0$,

\item[-] for each   $\mathrm{s}=\langle d_1,d_2,...,d_k\rangle$ in
$\{0,1\}^\ast$ and each $d_{k+1}\in \{0,1\}$ we have for all
$1\leq i\leq k$,
$H_{R_1}[\mathrm{s}'](d_i)=H_{R_1}[\mathrm{s}](d_i)$ where
$\mathrm{s}'=\langle d_1,d_2,...,d_k,d_{k+1}\rangle$.

 \end{itemize}
The relation $R_1$ initially is not determined. At first, the
subject is in the current state $\mathrm{e}_c=([],[\lambda])$. The
subject choose either 0 or 1 to know that if $R_1$ holds for it.
For example, suppose that the subject choose 0, then $R_1(0)$ is
true and the current state changes to $\mathrm{e}_c=([],[\langle
0\rangle])$. After this time, $R_1(0)$ is determined to be true in
the structure $\mathcal{M}$. Note that if the subject chose 1
instead of 0 (when the current state was
$\mathrm{e}_c=([],[\lambda])$) then whenever after that $R_1(0)$
is determined, it would be false.
\end{example}

For two states $\mathrm{e}=([\mathrm{s}_0,\mathrm{s}_1,
...,\mathrm{s}_{m_0}], [\mathrm{s}'_0,\mathrm{s}'_1,...,
\mathrm{s}'_{m_1}])$ and $\mathrm{e}'=([\mathrm{l}_0,\mathrm{l}_1,
...,\mathrm{l}_{m_0}], [\mathrm{l}'_0,\mathrm{l}'_1,...,
\mathrm{l}'_{m_1}])$, we say $\mathrm{e}\leq \mathrm{e}'$ whenever
for all $i$, $\mathrm{s}_i\leq \mathrm{l}_i$ and
$\mathrm{s}'_i\leq \mathrm{l}'_i$.

\begin{definition}
 TERM is the smallest set containing

\begin{itemize}
\item[] variable symbols,

\item[] constants  symbols in $C$,

\item[]  for each function symbol $f_i\in \mathcal{F}$, if
$t_1,t_2,...,t_{n_i}\in TERM$ then $f(t_1,t_2,...,t_{n_i})$ is a
term.
\end{itemize}

\end{definition}

\begin{definition} Let $\mathrm{e_c}=([\mathrm{s}_0,\mathrm{s}_1, ...,\mathrm{s}_{m_0}],
[\mathrm{s}'_0,\mathrm{s}'_1,..., \mathrm{s}'_{m_1}])$, $f_i\in F$
be a symbol function, and $d\in M^{n_i}$. We say the
interpretation of the symbol $f_i$ for $d$   is \emph{determined}
in state $\mathrm{e_c}$ whenever \begin{itemize}\item[-] $d$ is an
element of the finite sequence $\mathrm{s}_i$. \end{itemize} The
interpretation is defined to be
$f^{(\mathcal{M},\mathrm{e_c})}_i(d)=H_i[\mathrm{s}_i](d)$.

\noindent We say the interpretation of the symbol $R_i$ for $d$ is
\emph{determined} in state $\mathrm{e_c}$ whenever
\begin{itemize}\item[-] $d$ is an element of the finite sequence
$\mathrm{s}'_i$. \end{itemize} The interpretation is defined to be
$R^{(\mathcal{M},\mathrm{e_c})}_i(d)=H_i[\mathrm{s}'_i](d)$.

 \end{definition}

Let $t$ be a term using variable $\bar{v}=(v_1,...,v_m)$, and
$\bar{a}=(a_1,...,a_m)$ where $a_i\in M$.  Let
$\mathrm{e_c}=([\mathrm{s}_0,\mathrm{s}_1, ...,\mathrm{s}_{m_0}],
[\mathrm{s}'_0,\mathrm{s}'_1,..., \mathrm{s}'_{m_1}])$. For a
subterm $t'$ of $t$, we inductively define interpretation
$t'^{\mathrm{e_c}}(\bar{a})$ as follows:
\begin{itemize}
\item If $t'$ is a constant symbol $c$, then
$t'^{\mathrm{e_c}}(\bar{a})$ is determined in state $\mathrm{e_c}$
and $t'^{\mathrm{e_c}}(\bar{a}):= c^\mathcal{M}$.

\item  If $t'$ is the variable $v_i$, then
$t'^{\mathrm{e_c}}(\bar{a})$ is determined in state $\mathrm{e_c}$
and $t'^{\mathrm{e_c}}(\bar{a}):=a_i$.

\item If $t'$ is the  term $f_i(t_1,...,t_{n_i})$, then
$t'^{\mathrm{e_c}}(\bar{a})$ is determined in state $\mathrm{e_c}$
whenever
\begin{itemize}
\item[1.] for $1\leq j\leq n_i$, $t_j^{\mathrm{e_c}}(\bar{a})$ is
determined in state $\mathrm{e_c}$,

\item[2.]
$(t_1^{\mathrm{e_c}}(\bar{a}),...,t_{n_i}^{\mathrm{e_c}}(\bar{a}))$
is an element of the finite sequence $\mathrm{s}_i$.
\end{itemize} If $t'^{\mathrm{e_c}}(\bar{a})$ is determined then
$t'^{\mathrm{e_c}}(\bar{a}):=f^{(\mathcal{M}_i,\mathrm{e_c})}(t_1^{\mathrm{e_c}}(\bar{a}),...,t_{n_i}^{\mathrm{e_c}}(\bar{a}))$.

\end{itemize}


\begin{definition}
FORMULA is the smallest set satisfying the following conditions:
\begin{itemize}

\item[1)] $t_1,t_2\in TERM$ then $t_1=t_2\in FORMULA$,

\item[2)] for each predicate  symbol $R_i\in \mathcal{R}$, if
$t_1,t_2,...,t_{n_i}\in TERM$ then $R(t_1,t_2,...,t_{n_i})\in
FORMULA$,

\item[3)] $\varphi,\psi\in FORMULA$ then $\neg\varphi,
\varphi\wedge\psi, \varphi\vee\psi, \varphi\rightarrow\psi,
\forall y\varphi, \exists y\varphi \in FORMULA$. We call formulas
defined via item 1,2, and 3 \emph{atomic formulas}.
\end{itemize}
\end{definition}
Let $\varphi(v_1,v_2,...,v_n)$ be a formula with free variables
$v_1,v_2,...,v_n$. Similar to definition~1.1.6~\cite{kn:dm}, we
define what it means for $\varphi(v_1,v_2,...,v_n)$ to hold of
$(a_1,a_2,...,a_n)\in M^n$.

\begin{definition}
Let $\varphi$ be a formula with free variables
$\bar{v}=(v_1,v_2,...,v_n)$, and let $\bar{a}=(a_1,a_2,...,a_n)\in
M^n$. Let $\mathrm{e_c}=([\mathrm{s}_0,\mathrm{s}_1,
...,\mathrm{s}_{m_0}], [\mathrm{s}'_0,\mathrm{s}'_1,...,
\mathrm{s}'_{m_1}])$. We inductively define
$(M,\mathrm{e_c})\models \varphi(\bar{a})$ as follows:

\begin{itemize}
\item[i.] If $\varphi$ is $t_1=t_2$ then $(M,\mathrm{e_c})\models
\varphi(\bar{a})$ whenever

 for all states $\mathrm{e}$ after
$\mathrm{e_c}$ ($\mathrm{e_c}\leq \mathrm{e}$) if  both
$t_1^{\mathrm{e }}(\bar{a})$ and $t_2^{\mathrm{e}}(\bar{a})$ are
determined in state $\mathrm{e}$ then
$t_1^{\mathrm{e}}(\bar{a})=t_2^{\mathrm{e}}(\bar{a})$.



\item[ii.] If $\varphi$ is $R_i(t_1,t_2,...,t_{n_i})$ then
 $(M,\mathrm{e_c})\models \varphi(\bar{a})$  whenever

 for all states $\mathrm{e}$ after
$\mathrm{e_c}$ ($\mathrm{e_c}\leq \mathrm{e}$) if

\begin{itemize}
\item  $t_1^{\mathrm{e}}(\bar{a})$, $t_2^{\mathrm{e}}(\bar{a})$,
..., and $t_{n_i}^{\mathrm{e}}(\bar{a})$ are determined in state
$\mathrm{e}$, and \item
$R^{(\mathcal{M},\mathrm{e})}_i(t_1^{\mathrm{e}}(\bar{a}),...,t_{n_i}^{\mathrm{e}}(\bar{a}))$
is determined in state $\mathrm{e}$,
\end{itemize} then
$R_i^{(\mathcal{M},\mathrm{e})}(t_1^{\mathrm{e}}(\bar{a}),...,t_{n_i}^{\mathrm{e}}(\bar{a}))=1$

\item[iii.] If $\varphi$ is $\neg \psi$  then
$(M,\mathrm{e_c})\models \varphi(\bar{a})$ whenever

 for all states
$\mathrm{e}$ after $\mathrm{e_c}$ ($\mathrm{e_c}\leq \mathrm{e}$)
then $(M,\mathrm{e})\not\models \psi(\bar{a})$.

\item[iv.] If $\varphi$ is $\psi\wedge \theta$ then
$(M,\mathrm{e_c})\models \varphi(\bar{a})$ whenever

\begin{itemize}\item[-] for all states $\mathrm{e}$ after $\mathrm{e_c}$
($\mathrm{e_c}\leq \mathrm{e}$), if atomic formulas which are
subformula of $\psi(\bar{a})$ are determined in state $\mathrm{e}$
then $(M,\mathrm{e})\models \psi(\bar{a})$ \emph{\underline{and}
}\item[-] for all states $\mathrm{e}$ after $\mathrm{e_c}$
($\mathrm{e_c}\leq \mathrm{e}$) if atomic formulas which are
subformula of $\theta(\bar{a})$ are determined in state
$\mathrm{e}$ then $(M,\mathrm{e})\models \theta(\bar{a})$.
\end{itemize}

\item[v.] If $\varphi$ is $\psi \vee\theta$   then
 $(M,\mathrm{e_c})\models \varphi(\bar{a})$ whenever
 \begin{itemize}
\item[-] either for all states $\mathrm{e}$ after $\mathrm{e_c}$
($\mathrm{e_c}\leq \mathrm{e}$), if atomic formulas which are
subformula of $\psi(\bar{a})$ are determined in state $\mathrm{e}$
then $(M,\mathrm{e})\models \psi(\bar{a})$, \emph{\underline{or}}

\item[-]  for all states $\mathrm{e}$ after $\mathrm{e_c}$
($\mathrm{e_c}\leq \mathrm{e}$), if atomic formulas which are
subformula of $\theta(\bar{a})$ are determined in state
$\mathrm{e}$ then $(M,\mathrm{e})\models \theta(\bar{a})$,
 \end{itemize}

\item[vi.] If $\varphi$ is $\exists x \psi(\bar{v},x)$ then
$(M,\mathrm{e_c})\models \varphi(\bar{a})$ whenever

 there exists
$b\in M$ such that for all states $\mathrm{e}$ after
$\mathrm{e_c}$ ($\mathrm{e_c}\leq \mathrm{e}$), if atomic formulas
which are subformula of $\psi(\bar{a},b)$ are determined in state
$\mathrm{e}$ then $(M,\mathrm{e})\models \psi(\bar{a},b)$

\item[vii.]  If $\varphi$ is $\forall x \psi(\bar{v},x)$ then
$(M,\mathrm{e_c})\models \varphi(\bar{a})$ whenever

for all $b\in M$
  for all states $\mathrm{e}$ after $\mathrm{e_c}$
($\mathrm{e_c}\leq \mathrm{e}$), if atomic formulas which are
subformula of $\psi(\bar{a},b)$ are determined in state
$\mathrm{e}$ then $(M,\mathrm{e})\models \psi(\bar{a},b)$.

\item[viii.] If $\varphi$ is $\psi\rightarrow\theta$ then
$(M,\mathrm{e_c})\models \varphi(\bar{a})$ whenever

  for all states $\mathrm{e}$ after $\mathrm{e_c}$
($\mathrm{e_c}\leq \mathrm{e}$), if atomic formulas which are
subformula of $\psi(\bar{a})$ are determined in state
$\mathrm{e}$, and atomic formulas which are subformula of
$\theta(\bar{a})$ are determined in state $\mathrm{e}$, then if
$(M,\mathrm{e})\models \psi(\bar{a})$ then $(M,\mathrm{e})\models
\theta(\bar{a})$.

\end{itemize}
\end{definition}

The main difference between Kripke structures and
non-predetermined structures is that the actual state in Kripke
structure is fixed, but the actual state of non-predetermined
structures (called current state) changes thorough time up to
subject.

\begin{theorem} For all $\phi\in Formula$, and a
subject-dependent structure $\mathcal{M}$, for every states
$\mathrm{e}$ and $\mathrm{e}'$  we have:
\begin{center}
$\mathrm{e}\leq \mathrm{e}'$ and $(\mathcal{M},\mathrm{e})\models
\phi$ then $(\mathcal{M},\mathrm{e}')\models \phi$.
\end{center}

\end{theorem}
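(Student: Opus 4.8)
The plan is to prove the statement by a case analysis on the outermost connective of $\phi$, relying on a single structural fact: the order $\leq$ on states is transitive. This is immediate, since $\mathrm{e}\leq\mathrm{e}'$ is defined componentwise by the prefix order on strings, and the prefix order is transitive (from $\mathrm{t}=\mathrm{sv}$ and $\mathrm{u}=\mathrm{tw}$ we get $\mathrm{u}=\mathrm{s}(\mathrm{vw})$). Hence, if $\mathrm{e}\leq\mathrm{e}'$ then every state lying above $\mathrm{e}'$ also lies above $\mathrm{e}$; equivalently $\{\mathrm{f}:\mathrm{e}'\leq\mathrm{f}\}\subseteq\{\mathrm{f}:\mathrm{e}\leq\mathrm{f}\}$.

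The observation that makes the proof work is that in the clauses of the last definition the current state occurs \emph{only} as the lower bound of a universal quantifier over states. More precisely, each of the eight clauses expresses $(\mathcal{M},\mathrm{e_c})\models\phi$ by a condition of the shape
\[
Q\,b\in M\colon\quad\text{for all states }\mathrm{f}\text{ with }\mathrm{e_c}\leq\mathrm{f},\ \chi(\mathrm{f},b),
\]
where $Q\,b$ is vacuous (clauses i, ii, iii, viii), an existential quantifier $\exists b$ (clause vi), or a universal quantifier $\forall b$ (clause vii); where the body $\chi$ mentions $\mathrm{f}$ and $b$ but never $\mathrm{e_c}$; and where in the conjunction and disjunction clauses (iv, v) the condition is a conjunction, respectively a disjunction, of two such formulas. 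In particular the guards ``atomic subformulas of $\psi(\bar a)$ are determined in state $\mathrm{f}$'' and the subclauses ``$(\mathcal{M},\mathrm{f})\models\psi(\bar a)$'', ``$(\mathcal{M},\mathrm{f})\not\models\psi(\bar a)$'' appearing inside $\chi$ depend on $\mathrm{f}$ alone.

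Granting this, fix states $\mathrm{e}\leq\mathrm{e}'$ and assume $(\mathcal{M},\mathrm{e})\models\phi$. Unfold the defining condition at $\mathrm{e}$: if $Q$ is absent or universal, let $b$ be arbitrary; if $Q$ is $\exists b$, let $b$ be the witness provided by $(\mathcal{M},\mathrm{e})\models\phi$. In either case we have ``for all states $\mathrm{f}$ with $\mathrm{e}\leq\mathrm{f}$, $\chi(\mathrm{f},b)$''. Since $\mathrm{e}'\leq\mathrm{f}$ implies $\mathrm{e}\leq\mathrm{f}$ by transitivity, shrinking the range of the quantifier gives ``for all states $\mathrm{f}$ with $\mathrm{e}'\leq\mathrm{f}$, $\chi(\mathrm{f},b)$'', which is exactly the defining condition for $(\mathcal{M},\mathrm{e}')\models\phi$. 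For clauses iv and v one applies this to each conjunct, respectively to the chosen disjunct, and recombines. No induction hypothesis on subformulas of $\phi$ is needed: wherever $\chi(\mathrm{f},b)$ refers to satisfaction of a proper subformula it does so at a state $\mathrm{f}$ already ranging over the smaller set $\{\mathrm{f}:\mathrm{e}'\leq\mathrm{f}\}$, so that subclause is simply transported unchanged.

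The only real work is the bookkeeping behind the second paragraph: one must pass through clauses (i)--(viii) and confirm in each that $\mathrm{e_c}$ genuinely appears solely as such a lower bound and that nothing in the body secretly depends on it — the negation clause (iii), whose body is ``$(\mathcal{M},\mathrm{f})\not\models\psi$'', and the implication clause (viii), with its nested guard-and-implication body, being the two worth double-checking. This is routine, and once it is done the theorem follows. (Monotonicity of ``determined in state $\mathrm{f}$'' and of term interpretations along $\leq$, which one might expect to be needed, is in fact not used here, since those guards are carried along verbatim rather than analysed.)
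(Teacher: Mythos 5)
Your proof is correct, and it is more informative than what the paper offers: the paper's entire proof of this theorem is the phrase ``It is straightforward,'' so your write-up actually supplies the missing argument rather than diverging from one. Your key observation checks out clause by clause: in each of (i)--(viii) of the satisfaction definition the current state occurs only as the lower bound of a quantifier ``for all states $\mathrm{e}$ after $\mathrm{e_c}$,'' and the body (term interpretations, determinedness guards, and satisfaction or non-satisfaction of subformulas) is evaluated at the quantified state, never at $\mathrm{e_c}$ itself; so persistence reduces to transitivity of the prefix order on states, with the same existential witness reused in clause (vi) and the chosen disjunct reused in clause (v). This also vindicates your remark that no induction on subformulas and no separate monotonicity lemma for ``determined'' or for term values is needed --- the more conventional route (and presumably what the author had in mind as ``straightforward'') would be a structural induction in the style of Kripke-semantics persistence, which your case analysis shows to be unnecessary because even the atomic clauses already quantify over all future states. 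The only caveat is interpretive rather than mathematical: you read each ``whenever'' in the definition as a biconditional defining satisfaction, which is clearly the intended reading and is required for any proof of this theorem.
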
\begin{proof} It is straightforward.
\end{proof}

We prove that our proposed model theory is sound with respect of
intuitionistic deduction   system~(see page~40 of~\cite{kn:TD}).
For a set of formula $\Gamma$, and a formula $\phi$, we write
$\Gamma\vdash_i \phi$ to say that $\phi$ is derivable form
$\Gamma$ using intuitionistic deduction   system.

\begin{theorem}
Let $\Gamma\vdash_i \phi$ and $\mathcal{M}$ be a structure. Assume
for all formula $\psi\in \Gamma$, $(\mathcal{M},\mathrm{e})\models
\psi$ (abbreviated by $(\mathcal{M},\mathrm{e})\models \Gamma$).
Then $(\mathcal{M},\mathrm{e})\models \phi$.
\end{theorem}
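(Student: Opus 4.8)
The plan is to induct on the derivation witnessing $\Gamma\vdash_i\phi$ in the intuitionistic natural deduction calculus referred to in \cite{kn:TD}, showing for each inference rule that whenever its premises are semantically valid at an arbitrary state $\mathrm{e}$ with $(\mathcal{M},\mathrm{e})\models\Gamma$, its conclusion is also forced at $\mathrm{e}$. The assumption rule is immediate (then $\phi\in\Gamma$), and the rules governing $=$ (reflexivity and, if the calculus includes it, replacement of equals) unwind from the clause for $t_1=t_2$. The one structural device used throughout is the persistence theorem above: it transports $(\mathcal{M},\mathrm{e})\models\Gamma$ to every later state $\mathrm{e}'\geq\mathrm{e}$, so that the induction hypothesis attached to a subderivation may be invoked at $\mathrm{e}'$ and not merely at $\mathrm{e}$; with this, $\wedge$-, $\vee$-, $\to$- and $\forall$-, $\exists$-introduction are routine, the corresponding clauses being no stronger than what persistence together with the induction hypothesis supplies.

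The work is in the elimination rules, and for these I would first isolate three facts about the \emph{determined} predicate. (a) Determinacy of the interpretation of a symbol at a tuple, and the value it then takes, are persistent along $\leq$ --- this is precisely the coherence condition $H[\mathrm{s}'](d_i)=H[\mathrm{s}](d_i)$ in the definition of a non-predetermined function. (b) Since non-predetermined functions and relations are total, from any state $\mathrm{e}$ and any quantifier-free formula $\psi$ with parameters in $M$ there is an $\mathrm{e}'\geq\mathrm{e}$ at which every atomic subformula of $\psi$ is determined: one performs, in a suitable order, the finitely many queries needed to evaluate all subterms of $\psi$ and then all its atoms. (c) Once all atomic subformulas of a quantifier-free $\psi$ are determined at $\mathrm{e}'$, the value of $(\mathcal{M},\mathrm{e}'')\models\psi$ is the same for every $\mathrm{e}''\geq\mathrm{e}'$ and is computed from the (now stable) atomic values by the ordinary truth tables. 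Facts (a)--(c) together give the key reformulation: $(\mathcal{M},\mathrm{e})\models\psi$ holds if and only if $(\mathcal{M},\mathrm{e}')\models\psi$ holds for every $\mathrm{e}'\geq\mathrm{e}$ at which all atomic subformulas of $\psi$ are determined. This is what bridges the ``guarded'' shape of the clauses for $\wedge,\vee,\to$ --- which speak only of later states where the relevant atoms have been decided --- and the unguarded conclusions the elimination rules require, after which $\wedge$-, $\vee$-, $\to$- and $\exists$-elimination become the usual Kripke-style verifications.

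For the quantifier rules ($\forall$-elimination, $\exists$-introduction) and for the replacement rule for $=$ one needs, in addition, a substitution lemma: in any state at which a term $t$ has become determined with value $b$, the interpretation of $\psi(t)$ agrees with that of $\psi(b)$. Because every semantic clause consults a term's value only at states where that term is determined, this lemma propagates through the induction without circularity, and it lets the quantifier clauses --- already cast in ``for some/all $b\in M$, then for all later states'' form --- be handled as in ordinary Kripke semantics.

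The step I expect to be the main obstacle is fact (c) and its deployment in modus ponens and in $\vee$- and $\exists$-elimination. The naive attempt to read off $(\mathcal{M},\mathrm{e})\models\theta$ from $(\mathcal{M},\mathrm{e})\models\psi\to\theta$ and $(\mathcal{M},\mathrm{e})\models\psi$ only yields $\theta$ at some later, ``sufficiently decided'' state $\mathrm{e}''\geq\mathrm{e}$, and persistence runs the wrong way to bring this back to $\mathrm{e}$; it is exactly the stabilization behaviour of determined formulas, together with the persistence of atomic values, that legitimately performs the required ``pull-back''. A secondary point of care is the very formulation of the determinacy side-conditions when the immediate subformula is quantified, where the bound variable must be instantiated by an element of $M$ before ``determined'' is meaningful; the reading that makes the theorem true is the one that instantiates first.
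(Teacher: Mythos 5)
Your overall strategy (induction on the derivation, as in Troelstra--van Dalen's Theorem 5.10, with the persistence theorem transporting $(\mathcal{M},\mathrm{e})\models\Gamma$ to later states) is the same as the paper's, which gives no more detail than that sketch; the substance you add is facts (a)--(c) and the ``key reformulation''. That reformulation is where the argument breaks. Its right-to-left direction --- from ``$(\mathcal{M},\mathrm{e}')\models\psi$ at every $\mathrm{e}'\geq\mathrm{e}$ where all atoms of $\psi$ are determined'' back to ``$(\mathcal{M},\mathrm{e})\models\psi$'' --- fails when $\psi$ is a disjunction (and likewise an existential). The paper's clause v requires a \emph{single} disjunct to be forced uniformly at all later states where its atoms are determined, whereas your hypothesis only yields that at each fully determined later state \emph{some} disjunct, possibly varying with the state, holds there; facts (a)--(c) are correct but do not bridge this. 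The paper's own Example 2.5 refutes the lemma: with $M=\{0,1\}$ and $R_1$ giving value $1$ to whichever element is queried first and $0$ to the other, every state above the initial state $\mathrm{e}_0$ at which both $R_1(0)$ and $R_1(1)$ are determined satisfies $R_1(0)\vee R_1(1)$ (via the element queried first, forced by the coherence condition), yet neither disjunct is forced uniformly above $\mathrm{e}_0$, so $(\mathcal{M},\mathrm{e}_0)\not\models R_1(0)\vee R_1(1)$. So the pull-back you rely on for modus ponens, $\vee$-elimination and $\exists$-elimination --- exactly the place you flagged as the main obstacle --- is not available.

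Moreover, this is not a defect you can engineer around within the stated semantics: under the literal reading of clauses v and viii, modus ponens itself is unsound. Adding constants $c_0,c_1$ naming $0,1$, put $\chi:= R_1(c_0)\to R_1(c_0)$ and $\theta:= R_1(c_0)\vee R_1(c_1)$. Then $(\mathcal{M},\mathrm{e}_0)\models\chi$ trivially, and $(\mathcal{M},\mathrm{e}_0)\models\chi\to\theta$ because clause viii only inspects states at which the atoms of \emph{both} sides are determined, and at every such state $\theta$ holds; but $(\mathcal{M},\mathrm{e}_0)\not\models\theta$, while $\{\chi\to\theta\}\vdash_i\theta$. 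Hence the elimination-rule cases in your plan do not merely need more care --- they cannot all be carried out, and the theorem in the stated generality cannot be established along your route (the paper's two-line sketch gives no indication of how it would avoid the same problem). If you want a correct statement to prove, the semantic clauses for $\vee$, $\to$ and $\exists$ would have to be changed (e.g.\ dropping the determinedness guards in favour of ordinary Kripke-style clauses over the partial order of states), after which your persistence-based induction would go through in the standard way.
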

\begin{proof} The proof is similar to the proof of
theorem~5.10, page~82 of~\cite{kn:TD}. It is done by induction on
the derivations in ntuitionistic deduction   system. Let a
derivation terminates with a rule

\begin{center}
$\frac{\Gamma_1\vdash_i \psi_1~~~~~\Gamma_2\vdash_i
\psi_2~~~~~....~~~~~\Gamma_n\vdash_i \psi_n}{\Gamma\vdash_i
\phi}$.
\end{center}
By induction we assume that if $(\mathcal{M},\mathrm{e})\models
\Gamma_i$ then $(\mathcal{M},\mathrm{e})\models \psi_i$, then
using the assumption we prove that if
$(\mathcal{M},\mathrm{e})\models \Gamma$ then
$(\mathcal{M},\mathrm{e})\models \phi$.

\end{proof}

\section{Conclusion}
We introduced structures which functions and relations are not
necessary predetermined and the value of them is eventually
recognized by the way that the subject interacts with the
structure. One may ask that what is the use of non-predetermined
structures.  Suppose that $\Gamma$ is a set of formula, $\phi$ is
a formula, and we want to show that $\Gamma\not \vdash_i \phi$.
One way to show this is to find a model $\mathcal{M}$, and prove
that $\mathcal{M}\vDash \Gamma$ and $\mathcal{M}\not\vDash\phi$.
If constructing a predetermined model is difficult then we may try
to construct non-predetermined model.

\end{document}